\newtheorem{thm}{Theorem}[section]
\newtheorem{lem}[thm]{Lemma}
\theoremstyle{definition}
\newtheorem{remark}[thm]{Remark}
\DeclareMathAlphabet{\mbf}{OML}{cmm}{b}{it}
\newcommand\Ind{\operatorname{Ind}}
\newcommand\Core{\operatorname{Core}}
\newcommand{\UD}{\operatorname{UD}}
\newcommand\ed{\operatorname{ed}}
\newcommand\Mat{\operatorname{M}}
\newcommand\Gal{\operatorname{Gal}}
\newcommand\rank{\operatorname{rank}}
\newcommand\PGL{\operatorname{PGL}}
\newcommand\PGLn{\operatorname{PGL}_n}
\newcommand\bbZ{\mathbb{Z}}
\newcommand\bbQ{\mathbb{Q}}
\begin{document}

\title[Essential dimension]{An upper bound on the essential dimension of a
central simple algebra}
\author[Aurel Meyer]{Aurel Meyer$^\dagger$}
\thanks{$^\dagger$ Aurel Meyer was partially supported by
a University Graduate Fellowship at the University of British Columbia}
\author[Zinovy Reichstein]{Zinovy Reichstein$^{\dagger \dagger}$}
\thanks{$^{\dagger \dagger}$ Z. Reichstein was partially supported by
NSERC Discovery and Accelerator Supplement grants}

\address{Department of Mathematics, University of British Columbia,
Vancouver, BC V6T 1Z2, Canada}
\subjclass[2000]{16K20, 20C10} 


\keywords{Essential dimension, central simple algebra, projective linear group,
$G$-lattice}

\begin{abstract} 
We prove a new upper bound on the essential $p$-dimension of the projective
linear group $\PGL_n$.
\end{abstract}

\maketitle
\tableofcontents

\section{Introduction}

Let $k$ be a base field; all other fields will be assumed 
to be extensions of $k$.

Given a central simple algebra $A$ over a field $K$ one 
can ask whether $A$ can be written as $A=A_0\otimes_{K_0} K$ 
where $A_0$ is a central simple algebra over some subfield $K_0$ of $K$.
In that situation we say that $A$ {\em descends} to $K_0$.
The {\em essential dimension} of $A$, denoted $\ed(A)$, 
is the minimal transcendence degree over $k$ of 
a field $K_0\subset K$ such that $A$ descends to $K_0$.
It can be thought of as ``the minimal number of independent 
parameters" required to define $A$.

For a prime number $p$, the related notion of essential dimension at $p$ 
of an algebra $A/K$ is defined as $\ed(A;p)=\min\, \ed(A_{K^\prime})$,
where $K^\prime/K$ runs over all finite field extensions 
of degree prime to $p$.

We also define
\[ \ed(\PGL_n) := \text{\bf max} \, \{ \, \ed(A) \;  \} \, , \]
and
\[ \ed(\PGL_n; p) := \text{\bf max} \, \{ \, \ed(A; p) \;  \} \, , \]
where the maximum is taken over all fields $K/k$ and
over all central simple $K$-algebras $A$ of degree $n$.
The appearance of $\PGL_n$ in the symbols $\ed(\PGL_n)$ 
and $\ed(\PGL_n; p)$ has to do with the fact that central simple 
algebras of degree $n$ are in a natural bijective correspondence 
with $\PGL_n$-torsors. In fact, one 
can define $\ed(G)$ and $\ed(G; p)$ 
for every algebraic $k$-group $G$ in a similar manner, 
using $G$-torsors instead of central simple algebras; 
see~\cite{reichstein2}, \cite{ry} or \cite{bf}.

To the best of our knowledge, 
the problem of computing $\ed(\PGL_n)$ was first
raised by C.~Procesi in the 1960s.  Procesi and S. Amitsur constructed
so-called {\em universal division algebras} $\UD(n)$
and showed that $\UD(n)$ has various
generic properties among central simple algebras of
degree $n$. In particular, their arguments can be used to
show that 
\[ \text{$\ed(\UD(n)) \ge \ed(A)$
and
$\ed(\UD(n); p) \ge \ed(A; p)$} \]
for any prime integer $p$; cf.~\cite[Remark 2.8]{lrrs}.  
Equivalently,
\[ \text{$\ed (\UD(n)) = \ed(\PGL_n)$ and
$\ed (\UD(n); p) = \ed(\PGL_n; p)$.} \]
Since the center of $\UD(n)$ has transcendence degree $n^2 +1$
over $k$, we conclude that $\ed(\PGL_n) \le n^2 + 1$.
Procesi showed (using different terminology) that in fact,
\[ \ed(\PGL_n) \leq n^2 \, ; \]
see~\cite[Theorem 2.1]{procesi}.

The problem of computing $\ed(\PGL_n)$ was raised again 
by B. Kahn in the early 1990s. In particular, in 1992
Kahn asked the second author if $\ed(\PGL_n)$ 
grows sublinearly in $n$, i.e., whether
\[ \ed(\PGL_n) \le a n + b \] for some positive 
real numbers $a$ and $b$.  To the best of our 
knowledge, this question never appeared in print but 
it is implicit in~\cite[Section 2]{kahn}. It remains open;
the best known upper bound,
\begin{equation} \label{e.upper-bound}
\ed(\PGL_n) \le \begin{cases}
\text{$\frac{(n-1)(n-2)}{2}$, for every odd $n \ge 5$ and} \\
\text{$n^2 - 3n + 1$, for every $n \ge 4$}
\end{cases}
\end{equation}
(see~\cite{lr}, \cite[Theorem 1.1]{lrrs},~\cite[Proposition 1.6]{lemire}
and~\cite{ff}), is quadratic in $n$ and the best known lower bound, 
\[ \ed(\PGL_{p^r})\ge\ed(\PGL_{p^r}; p) \ge 2r \, , \]
is logarithmic.


Note that if $p^s$ is the largest power of $p$ dividing $n$ then
one easily checks, using primary decomposition of central simple
algebras, that $\ed(\PGL_n; p) = \ed(\PGL_{p^s}; p)$.
Thus for the purpose of computing $\ed(\PGL_n; p)$ it suffices to consider
the case where $n = p^s$. In this case we have showed that 
\[ \ed(\PGL_{p^s}; p) \le p^{2s-1}-p^s+1 \]
for any $s \ge 2$; see~\cite[Corollary 1.2]{mr}.
The main result of this paper is the following stronger upper bound.

\begin{thm}\label{thm.pgl}
Let $n=p^s$ for some $s\ge 2$. 
Then
\[
\ed(\PGL_n; p)\le 2\frac{n^2}{p^2}-n+1
\]
\end{thm}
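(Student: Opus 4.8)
The plan is to bound $\ed(\PGL_n;p)$ by passing to the normalizer of a maximal torus and then exhibiting an economical $p$-generically free representation of it. Recall that every central simple algebra $A$ of degree $n$ contains a maximal separable subfield; equivalently, the inner form $\PGL_1(A)$ of $\PGL_n$ associated to the corresponding torsor has a maximal torus defined over the base field. Hence the torsor reduces to the normalizer $N=N_{\PGL_n}(T)=T\rtimes\Symn$ of a split maximal torus $T$, so that $\ed(\PGL_n;p)\le\ed(N;p)$. Since we only care about essential dimension at $p$, a further prime-to-$p$ extension lets me replace the Weyl group $\Symn$ by one of its Sylow $p$-subgroups $P$, reducing an $N$-torsor to a $T\rtimes P$-torsor. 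Thus it suffices to bound $\ed(N';p)$ for the group $N'=T\rtimes P$, which has dimension $\dim T=n-1$.

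Next I would invoke the standard inequality $\ed(N';p)\le\dim V-\dim N'=\dim V-(n-1)$, valid for any $p$-generically free representation $V$ of $N'$. For a group of the form $T\rtimes P$ a convenient supply of such $V$ comes from \emph{monomial} representations: choose a $P$-stable finite set $\Omega$ of characters of $T$ and set $V=\bigoplus_{\chi\in\Omega}k_\chi$, on which $T$ acts through the weights $\chi$ and $P$ permutes the lines. Such a $V$ is $p$-generically free provided (a) the characters in $\Omega$ generate $X^*(T)$ up to a subgroup of index prime to $p$, so that $T$ acts with generic stabilizer of order prime to $p$, and (b) $P$ acts faithfully on the set $\Omega$, so that its permutation action on $V$ is generically free. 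Granting (a) and (b), the theorem reduces to the combinatorial task of producing an $\Omega$ of size $2(n/p)^2$, since then $\dim V-(n-1)=2(n/p)^2-n+1$.

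The construction of $\Omega$ is the heart of the matter. Identifying $X^*(T)$ with the root lattice of type $A_{n-1}$, i.e.\ with $\{\sum a_i e_i:\sum a_i=0\}\subset\bbZ^n$, the Sylow subgroup $P=\bbZ/p\wr\cdots\wr\bbZ/p$ acts on $\{1,\dots,n\}$ by its natural imprimitive action, grouped at the top level into $p$ blocks of size $n/p$. The earlier bound $p^{2s-1}-p^s+1$ of \cite{mr} corresponds to a $p$-generically free $\Omega$ of size $p\,(n/p)^2$, essentially one block's worth of root-differences for each of the $p$ blocks; the improvement I am aiming for replaces the factor $p$ by $2$, using root-differences spread over only \emph{two} blocks together with their $P$-translates. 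When $p=2$ this is transparent: the across-block roots $e_i-e_j$ (with $i,j$ in different blocks) form a single faithful $P$-orbit of size exactly $2(n/2)^2$, they contain a $\bbZ$-basis of the $A_{n-1}$ lattice, and (a) and (b) hold on the nose, recovering the (in this case equal) bound.

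The hard part will be carrying out step three for odd $p$ while keeping $|\Omega|=2(n/p)^2$. There is a genuine tension: the $P$-orbit of a single primitive across-block root is too large (its size grows like a higher power of $p$), whereas the $P$-invariant across-block characters that do have small orbits become divisible by $p$ in the quotient $X^*(T)/(\text{within-block sublattice})$ and so fail the prime-to-$p$ spanning condition (a). Resolving this — most plausibly by an inductive choice of $\Omega$ that feeds the within-block structure (governed recursively by the degree $n/p$ case) into the across-block generators, rather than taking a full orbit — is exactly where the new idea must enter, and is the step I expect to be the main obstacle. Once $\Omega$ is pinned down, checking conditions (a) and (b) should be a finite, if delicate, lattice computation.
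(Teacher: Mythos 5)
Your reduction chain $\ed(\PGL_n;p)\le\ed(N;p)=\ed(T\rtimes P;p)$ is valid, but it caps what you can prove at $\ed(N;p)$, and that quantity is too large. The essential $p$-dimension of the normalizer $N$ of a maximal torus in $\PGL_{p^s}$ is computed exactly in \cite{mr} (the very reference the introduction cites for the weaker bound), and for $s\ge 2$ it equals $p^{2s-1}-p^s+1=\frac{n^2}{p}-n+1$, which for odd $p$ strictly exceeds the target $2\frac{n^2}{p^2}-n+1$. Concretely, the combinatorial task you defer to the end --- producing a $P$-stable set $\Omega$ of $2(n/p)^2$ characters satisfying your conditions (a) and (b) --- is not merely hard but impossible for odd $p$: such an $\Omega$ would give a $p$-generically free representation of $T\rtimes P$ of dimension $2(n/p)^2$ and hence $\ed(N;p)\le 2(n/p)^2-n+1$, contradicting the lower bound of \cite{mr}. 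The ``genuine tension'' you flag is therefore an actual obstruction, not a technical hurdle; the fact that your construction works on the nose for $p=2$ is explained by the two bounds coinciding there.

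The paper's proof abandons the normalizer entirely. The new input is a theorem of Rowen and Saltman \cite{rs}: for $n=p^s$ with $s\ge 2$, after a finite extension of the center of degree prime to $p$, the universal division algebra $\UD(n)$ contains a subfield $F$ Galois over the center with group $\bbZ/p\times\bbZ/p$. Theorem~\ref{thm.pgl} then follows from Theorem~\ref{thm.csa}, whose proof enlarges $F$ to a maximal separable subfield $L$, passes to the Galois closure $E$ with groups $H=\Gal(E/L)\le N=\Gal(E/F)\trianglelefteq G=\Gal(E/K)$, and exhibits a faithful $G$-lattice of rank $r[G:H][N:H]$ (here $r=2$, giving $2n^2/p^2$) as the kernel of a map from a direct sum of permutation lattices $\bbZ[G/(H\cap H^{g})]$ onto $\omega(G/H)$, the generators $g$ being chosen carefully using the fact that $G/N$ is $2$-generated. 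That extra Galois-theoretic structure on $\UD(n)$, rather than a more economical representation of $T\rtimes P$, is the missing idea; without it the argument cannot reach the stated bound.
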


A. S. Merkurjev~\cite{me2} recently showed that for $s = 2$ this 
bound is sharp, i.e., $\ed(\PGL_{p^2};p)=p^2 + 1$. We conjecture
that this bound is sharp for every $s \ge 2$; this would imply,
in particular, that $\ed(\PGLn)$ is not sublinear in $n$.

Our upper bound on $\ed(\PGL_n; p)$ is a consequence of the 
following result. Here $n$ is not assumed to be a prime power.

\begin{thm}\label{thm.csa}
Let $A/K$ be a central simple algebra of degree $n$.
Suppose $A$ contains a field $F$, Galois over $K$
and $\Gal(F/K)$ can be generated by $r \ge 1$ elements.
If $[F: K] = n$ then we further assume that $r \ge 2$.
Then
\[
\ed(A)\le r\frac{n^2}{[F:K]}-n+1
\]
\end{thm}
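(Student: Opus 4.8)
The plan is to bound the essential dimension of $A$ by constructing an explicit descent, realizing $A$ over a subfield $K_0 \subset K$ whose transcendence degree over $k$ is at most $r\frac{n^2}{[F:K]}-n+1$. The central observation is that the subfield $F$, being Galois over $K$ with $\Gal(F/K)$ generated by $r$ elements, lets us replace the generic torsor structure of $A$ by a much more economical presentation built from the action of the Galois group. Since $A$ contains $F$ and $[F:K]=m$ divides $n$, the centralizer $B=C_A(F)$ is a central simple $F$-algebra of degree $n/m$, and $A$ is obtained from $B$ together with the Galois descent data recording how $\Gal(F/K)$ acts. Thus the parameters needed to specify $A$ split into: the parameters for $B/F$ and the parameters encoding the twisting cocycle of $\Gal(F/K)$.

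First I would set up the group-theoretic version of the problem. Since essential dimension is computed by the generic object, it suffices to bound $\ed$ of the relevant torsor. The hypothesis ``$A$ contains $F$, Galois over $K$ with $\Gal(F/K)$ generated by $r$ elements'' translates into saying that the corresponding $\PGL_n$-torsor is induced from a torsor under a smaller subgroup: namely the normalizer-type subgroup $N$ of $\PGL_n$ that stabilizes the embedded Galois algebra $F \otimes_K \overline{K} \cong \overline{K}^m$ and acts on its factors through $\Gal(F/K)$. Concretely, picking a maximal set of diagonal idempotents corresponding to the $m$ factors, one gets an embedding $\GL_{n/m}^m \rtimes \Gal(F/K) \hookrightarrow \GL_n$, and after passing to $\PGL_n$ the torsor for $A$ reduces to this subgroup. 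So the task becomes bounding the essential dimension of a torsor under a group of the shape $(\GL_{n/m})^m \rtimes \Gamma$ modulo scalars, where $\Gamma = \Gal(F/K)$.

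Next I would count parameters for this structure by splitting the cocycle into its two natural pieces. The semidirect product structure means an $H^1$-class decomposes into data for the $\Gamma$-part and data for the normal $(\GL_{n/m})^m$-part. The $\Gamma$-part is governed by the $r$ generators of $\Gamma$: realizing the Galois extension $F/K$ with a generating set of size $r$ should cost on the order of $r$ copies of a space whose dimension is controlled by $(n/m)^2$ — roughly, each generator of $\Gamma$ contributes a matrix-valued parameter living in a space of dimension $(n/m)^2 \cdot m = n^2/m$, which is exactly where the factor $r\frac{n^2}{[F:K]}$ in the bound originates. The normal part contributes the interior structure of $B$, and the ``$-n+1$'' correction comes from quotienting by the diagonal scalar torus and from the fact that one relation among the generators (or the determinant/scalar normalization) can be absorbed, trimming $n-1$ parameters. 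The degenerate case $[F:K]=n$, where $B=F$ is a field and there are no interior $\GL$-parameters, is exactly why the extra hypothesis $r\ge 2$ is imposed: with $r=1$ the count would be too tight and the construction would not leave enough room, so one needs at least two generators to carry the twisting data.

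The main obstacle I expect is the bookkeeping in the middle step: correctly identifying the subgroup $(\GL_{n/m})^m \rtimes \Gamma$ inside $\PGL_n$, verifying that the torsor for $A$ genuinely reduces to it (i.e. that the existence of the embedded Galois field $F$ forces the reduction of structure group), and then showing that a generically free representation or versal parameter space of the asserted dimension exists, so that the $H^1$-class can be written over a field of the claimed transcendence degree. In particular, the delicate point is proving the sharp $-n+1$ saving rather than a weaker additive constant; this requires carefully accounting for the scalars one may project away and confirming that no further independent parameters are forced by the interplay between the $\Gamma$-action and the $(\GL_{n/m})^m$-factors. Once the reduction to this subgroup and the dimension count of its versal torsor are in hand, the bound $\ed(A)\le r\frac{n^2}{[F:K]}-n+1$ follows, and Theorem~\ref{thm.pgl} is recovered by taking $F$ to be a maximal Galois-within-$A$ subfield in the $n=p^s$ case with $\Gamma$ generated by $r=2$ elements.
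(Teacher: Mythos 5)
Your starting point --- reducing the structure group of the $\PGL_n$-torsor of $A$ to the normalizer-type subgroup $\bigl(\GL_{n/m}^{\,m}\rtimes\Gamma\bigr)/\Gm$ determined by the embedded \'etale algebra $F\otimes_K\overline{K}$ --- is a legitimate and genuinely different way to begin, but everything that actually produces the bound is asserted rather than argued, and the mechanisms you invoke would not deliver it. The claim that ``each generator of $\Gamma$ contributes a matrix-valued parameter living in a space of dimension $(n/m)^2\cdot m=n^2/m$'' corresponds to no construction: essential dimension is not bounded by counting cocycle values on generators, and the generically-free-representation route you fall back on is far too coarse (already for $m=1$, i.e.\ $\PGL_n$ itself, the smallest known generically free linear representation has dimension $2n^2$, giving only $n^2+1$). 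Likewise the ``$-n+1$'' saving cannot come from ``quotienting by the diagonal scalar torus,'' which is $1$-dimensional; in the paper it comes from a nontrivial lattice-theoretic result (\cite[Theorem 3.5]{lrrs}): if $0\to M\to P\to\omega(G/H)\to 0$ is an exact sequence of $G$-lattices with $P$ permutation and $M$ faithful, then $\ed(A)\le\rank(M)-n+1$. Your explanation of why $r\ge 2$ is needed when $[F:K]=n$ (``the count would be too tight'') likewise papers over the real issue, which is that in the cyclic case the kernel lattice $M$ is $\bbZ$ with trivial $G$-action, so faithfulness fails and the lattice theorem does not apply.

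More importantly, the proposal contains no counterpart to the combinatorial heart of the paper. The paper first reduces (via a Hilbertianity argument after adjoining indeterminates) to the case where $F$ lies in a separable maximal subfield $L$, making $A$ a $G/H$-crossed product with $H=\Gal(E/L)\le N=\Gal(E/F)$. The general bound one then gets is $\ed(A)\le\sum_i [G:(H\cap H^{g_i})]-[G:H]+1$ for any set $g_1,\dots,g_s$ generating $G$ over $H$; applied naively this gives only the weaker estimate with $s$ (the number of generators of $G$ over $N$ will not do directly). The new content is the choice of generators $\{g_in_j\}$, where the $n_j$ run over coset representatives of $H'=\langle H,H^{g_1},\dots,H^{g_r}\rangle$ in $N$ with the $g_i$ chosen to make $[N:H']$ minimal, together with the resulting inequality $[N:H']\le[N:(H^{g_ig}\cdot H)]$, which is exactly what collapses the double sum to $r\,[G:H]\,[N:H]=r\,n^2/[F:K]$. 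Without an analogue of this step (or of the lattice theorem behind the $-n+1$), your outline does not constitute a proof.
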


Note that we always have $[F: K] \le n$. In the special 
case where equality holds, i.e., $A$ is a crossed product
in the usual sense, Theorem~\ref{thm.csa} reduces 
to~\cite[Corollary 3.10(a)]{lrrs}. 

To deduce Theorem~\ref{thm.pgl} from Theorem~\ref{thm.csa},
let $n = p^s$ and $A = \UD(n)$.  In~\cite[1.2]{rs}, 
L. H. Rowen and D. J. Saltman 
showed that if $s \ge 2$ then there is a finite field 
extension $K^\prime/K$ of degree prime to $p$,
such that $A^\prime := A\otimes_KK^\prime$ 
contains a field $F$, Galois over $K^\prime$ 
with $\Gal(F/K') \simeq \bbZ/p\times \bbZ/p$.  
Thus, if $s \ge 2$, Theorem~\ref{thm.csa} tells us that
\[
\ed(\PGL_n; p)=\ed(A;p)\le \ed(A^\prime) \le 2\frac{n^2}{p^2}-n+1 \, . 
\]
This proves Theorem~\ref{thm.pgl}.
\qed

The remainder of this paper will be devoted to proving Theorem~\ref{thm.csa}.
We reduce the problem to a question about $G$-lattices, using the same 
approach as in~\cite[Sections 2--3]{lrrs}, but our analysis is more 
delicate here, and the results (Theorems~\ref{thm.csa} and~\ref{thm.H}) 
are stronger.

\section{$G/H$-crossed products}

\begin{lem} \label{lem2.1}
In the course of proving Theorem~\ref{thm.csa} we may assume 
without loss of generality that $F$ is contained in a subfield
$L$ of $A$ such that $L/K$ is a separable extension of degree 
$n = \deg(A)$.
\end{lem}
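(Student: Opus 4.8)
The plan is to exhibit such an $L$ directly inside $A$, so that the asserted reduction costs nothing: any maximal separable subfield of $A$ that contains $F$ can simply be adjoined to the standing hypotheses and carried through the rest of the argument. No field extension of $K$ and no replacement of $A$ is needed.

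First I would pass to the centralizer $C := C_A(F)$ of $F$ in $A$. Since $F$ is a commutative simple $K$-subalgebra of the central simple algebra $A$, the double centralizer theorem shows that $C$ is simple with center $Z(C) = F$ --- hence central simple over $F$ --- and that $\dim_K(F)\cdot\dim_K(C) = \dim_K(A) = n^2$. This gives $\dim_F(C) = n^2/[F:K]^2$, so $\deg_F(C) = n/[F:K] =: m$.

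Next I would invoke the structure theory of central simple algebras: any central simple algebra over a field possesses a maximal subfield that is separable over the base and has degree equal to the degree of the algebra. Applied to $C/F$, this produces a field $L$ with $F \subseteq L \subseteq C \subseteq A$, with $L/F$ separable and $[L:F] = m$. Because $F/K$ is Galois, hence separable, transitivity of separability along $K \subseteq F \subseteq L$ yields that $L/K$ is separable, while $[L:K] = [L:F]\,[F:K] = m\,[F:K] = n = \deg(A)$. Thus $L$ is a separable maximal subfield of $A$ containing $F$ (equivalently $C_A(L) = L$, since $C_A(L)=C_C(L)=L$), and since such an $L$ always exists we may assume it is given.

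The only routine points are the dimension bookkeeping coming out of the double centralizer theorem and the transitivity of separability in the tower $K \subseteq F \subseteq L$. The one genuinely substantive ingredient --- the main obstacle, though it is classical --- is the existence of a separable maximal subfield of the central simple $F$-algebra $C$; I would establish this by writing $C \cong \Mat_t(D)$ for a central division $F$-algebra $D$, choosing a separable maximal subfield of $D$ (which exists for any central division algebra), and enlarging it to a separable maximal subfield of $C$ inside the matrix algebra. Everything else is formal.
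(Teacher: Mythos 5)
Your argument breaks down at the one step you yourself flag as the substantive ingredient. The classical existence theorem for separable maximal subfields applies to central \emph{division} algebras: a central division algebra $D$ of degree $d$ over $F$ always contains a separable field extension of $F$ of degree $d$. It does \emph{not} extend to arbitrary central simple algebras. If $C \cong \Mat_t(D)$ with $t > 1$, then ``enlarging'' a separable maximal subfield $E \subset D$ (of degree $d$ over $F$) to a subfield of $C$ of degree $td$ requires a separable field extension of $E$ of degree $t$, and such an extension need not exist. Concretely, take $K = \mathbb{R}$, $A = \Mat_2(\mathbb{H})$ (so $n = 4$) and $F = \mathbb{C} \subset \mathbb{H} \subset A$. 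Then $C_A(F) \cong \Mat_2(\mathbb{C})$ is central simple of degree $2$ over $\mathbb{C}$, but it contains no field of degree $2$ over $\mathbb{C}$ at all, let alone a separable one; indeed $A$ has no subfield of degree $4$ over $\mathbb{R}$ whatsoever. So the field $L$ you want to ``exhibit directly inside $A$'' simply may not be there, and the reduction is not free.

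This is precisely why the paper's proof does something you declare unnecessary: it first replaces $K$ by $K(t_1,t_2)$ and $A$ by $A \otimes_K K(t_1,t_2)$, which is harmless because essential dimension is unchanged under adjoining an indeterminate (\cite[Lemma~2.7(a)]{lrrs}), and which makes $K$ Hilbertian. Over a Hilbertian base one can manufacture separable field extensions of any prescribed degree (by specializing the general polynomial), and the paper combines this with a maximality argument on $F'$ and the double centralizer theorem to force $C_A(F') = F'$. The paper even remarks parenthetically that ``a subfield $L \subset A$ of degree $n$ over $K$ may not exist without this assumption.'' Your centralizer computation and the separability bookkeeping in the tower $K \subseteq F \subseteq L$ are fine, but without the passage to a Hilbertian base the existence claim at the heart of your proposal is false.
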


\begin{proof} Note that we are free to replace $K$ by $K(t)$,
$F$ by $F(t)$ and $A$ by $A(t) = A \otimes_K K(t)$, where $t$ 
is an independent variable.
Indeed, $\ed_k \, A(t) = \ed_k(A)$; see, 
e.g.,~\cite[Lemma 2.7(a)]{lrrs}.
Thus if the inequality of Theorem~\ref{thm.csa} is 
proved for $A(t)$, it will also hold for $A$.  

The advantage of passing from $A$ to $A(t)$ is that $K(t)$ if Hilbertian 
for any infinite field $K$; see, e.g.,~\cite[Proposition 13.2.1]{fj}.
Thus after adjoining two variables, $t_1$ and $t_2$ as above, 
we may assume without loss of generality that $K$ is Hilbertian. 
(Note that a subfield $L \subset A$ of degree $n$ over $K$
may not exist without this assumption.)

Let $F \subset F'$ be maximal among separable field extension 
of $F$ contained in $A$. We will look for $L$ inside the centralizer 
$C_A(F')$. By the Double Centralizer Theorem, 
$C_A(F')$ is a central simple algebra with center $F'$. The maximality 
of $F'$ tells us that $C_A(F')$ contains no non-trivial field 
extensions of $F'$. In particular, $C_A(F') = \Mat_r(F')$, where 
$r [F':K] = n$. 

On the other hand, since $K$ is Hilbertian, so is 
its finite separable extension $F'$; cf.~\cite[12.2.3]{fj}.
Consequently, $F'$ admits a finite separable extension 
$L/F'$ of degree $r$. (To construct $L/F'$, 
start with the field extension $L_r = F'(t_1, \dots, t_r)[x]/(f(x))$ 
of $F'(t_1, \dots, t_n)$
of degree $r$, where $f(x) = x^r + t_1 x^{r-1} + \ldots + t_{n-1} x + t_n$
is the general polynomial of degree $r$. Then specialize $t_1, \dots, t_r$
in $F'$, using the Hilbertian property, to obtain a field extension
$L/F'$ of degree $r$.) Any such $L/F'$ can be embedded 
into $\Mat_r(F')$ via the regular representation 
of $L$ on $L = (F')^r$; cf.~\cite[Lemma 13.1a]{pierce}. 
By the maximality of $F'$, we conclude that 
$L = F'$, i.e., $r = 1$ and $[L:K] = n$, as desired.
\end{proof} 

 Let us now assume that our central simple algebra $A/K$ 
has a separable maximal subfield
$L/K$, as in Lemma~\ref{lem2.1}. We will denote the Galois closure of
$L$ over $K$ by $E$ and the associated Galois groups by $G= \Gal(E/K)$, 
$H= \Gal(E/L)$ and $N=\Gal(E/F)$, as in the diagram below.
\[
\xymatrix{
&&E\ar@{-}[ddl]\ar@/^-1pc/@{--}[ddl]_H\ar@/^1pc/@{--}[dddl]^N\ar@/^2pc/@{--}[ddddl]^G\\
A\ar@{-}[dr]&&\\
&L\ar@{-}[d]&\\
&F\ar@{-}[d]&\\
&K&
}
\]
In the terminology of~\cite{lrrs}, $A/K$ is a $G/H$-crossed product;
cf. also~\cite[Appendix]{fss}. Note that since $E/K$
is the smallest Galois extension containing $L/K$, we have 
\begin{equation} \label{e.H}
\Core_G(H)=\bigcap_{g \in G} \, H^g = \{ 1 \} \, .
\end{equation}
where $H^g := g H g^{-1}$. We will assume that this condition is
satisfied whenever we talk about $G/H$-crossed products.

Using the notation introduced above and remembering that
$[G: H] = [L: K] = \deg(A) = n$, and $\dfrac{n}{[F: K]} = [L : F] = [N : H]$, 
we can restate Theorem~\ref{thm.csa} as follows.

\begin{thm} \label{thm.csa'}
Let $A$ be a $G/H$-crossed product. Suppose $H$ is contained 
in a normal subgroup $N$ of $G$ and $G/N$ is generated by $r$ elements.
Furthermore, assume that either $H \ne \{ 1 \}$ or $r \ge 2$. Then
\[ \ed(A) \le r [G: H] \cdot [N: H] - [G: H] + 1 \, . \]
\end{thm}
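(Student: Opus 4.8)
The plan is to follow the lattice-theoretic reduction of \cite[Sections 2--3]{lrrs}. After Lemma~\ref{lem2.1} we may assume $A$ is a $G/H$-crossed product with separable maximal subfield $L = E^H$, so that $[G:H] = \deg(A) = n$, the subgroup $H$ lies in the normal subgroup $N = \Gal(E/F)$, and $\Core_G(H) = \{1\}$. The torus $\bar D \subset \PGL_n$ controlling such crossed products (the maximal torus, twisted by the $G$-action coming from $E$) has character lattice the augmentation ideal $I_{G/H} := \ker(\bbZ[G/H] \to \bbZ)$, of rank $[G:H]-1$. The first step is to record the reduction: if $P$ is a permutation $G$-lattice together with a surjection $P \twoheadrightarrow I_{G/H}$ of $G$-lattices, then dually $\bar D$ embeds as a subtorus $\bar D \hookrightarrow T_P$ of the quasi-trivial torus $T_P$ with character lattice $P$; setting $S := T_P/\bar D$ and using that $H^1(-,T_P)$ vanishes (Hilbert 90 and Shapiro's lemma), the sequence $1 \to \bar D \to T_P \to S \to 1$ gives $H^1(K,\bar D) = S(K)/\im T_P(K)$, so every $\bar D$-torsor is defined over a subfield of transcendence degree at most $\dim S$. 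A versality/generic-freeness argument as in \cite{lrrs} then yields $\ed(A) \le \dim S = \rk(P) - ([G:H]-1)$. As a sanity check, when $H = \{1\}$ one recovers \cite[Corollary 3.10(a)]{lrrs} by taking $P = \bbZ[G]^{\oplus r}$ and $e_i \mapsto g_i - 1$, where $g_1,\dots,g_r$ generate $G$.

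Thus Theorem~\ref{thm.csa'} reduces to a purely combinatorial statement about lattices, which would be the content of Theorem~\ref{thm.H}: \emph{there is a permutation $G$-lattice $P$ of rank at most $r[G:H]\cdot[N:H]$ admitting a surjection onto $I_{G/H}$, constructed using only $r$ generators of the quotient $G/N$.} The natural tool is the short exact sequence of $G$-lattices
\[ 0 \to \Ind_N^G I_{N/H} \to I_{G/H} \to I_{G/N} \to 0 , \]
coming from the transitivity $\bbZ[G/H] = \Ind_N^G \bbZ[N/H]$ of induction. I would cover the quotient $I_{G/N}$ cleanly: since $G/N$ is generated by $r$ elements, the $\bar g_i - 1$ generate $I_{G/N}$, giving $\bbZ[G/N]^{\oplus r} \twoheadrightarrow I_{G/N}$. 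I would then lift: the preimage in $I_{G/H}$ of $\bar g_i - 1$ under $I_{G/H} \to I_{G/N}$ is $g_i H - H$, whose $G$-stabilizer is $H \cap H^{g_i}$, so $g_i H - H$ is the image of the canonical generator under a $G$-map $\bbZ[G/(H \cap H^{g_i})] \to I_{G/H}$. Combining these lifts with a permutation cover of the submodule $\Ind_N^G I_{N/H}$ produces a candidate $P$; the indices $[G : H \cap H^{g_i}]$ together with the rank $[N:H]-1$ of the $N$-lattice $I_{N/H}$ are precisely the quantities that must be bookkept so that the total rank comes out to $r[G:H]\cdot[N:H]$.

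I expect this rank bookkeeping to be the main obstacle, and it is presumably the ``more delicate analysis'' the paper alludes to. The difficulty is that we are allowed only $r$ generators of the quotient $G/N$, not of $G$ itself; because $H$ need not be normal, the naive cover $\bbZ[G/H]^{\oplus r}$ fails to surject onto $I_{G/H}$ (its canonical generators are $H$-invariant, whereas $g_iH - H$ is $H$-invariant only when $g_i \in N_G(H)$), forcing one to pass to the intersection subgroups $H \cap H^{g_i}$ and to exploit the normality of $N$ to keep ranks under control while retaining surjectivity. A second point to verify is generic freeness of the $G \ltimes \bar D$-action on the representation attached to $P$, which is what validates the reduction of the first paragraph; this is exactly where the hypothesis ``$H \ne \{1\}$ or $r \ge 2$'' is used, the excluded case $H = \{1\}$, $r = 1$ being the cyclic one in which the asserted bound already fails.
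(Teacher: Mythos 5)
Your first paragraph correctly reconstructs the reduction to lattices --- it is essentially Theorem~\ref{thm.lrrs}, quoted from \cite{lrrs} --- and your identification of the $G$-stabilizer of $g_iH - H$ as $H \cap H^{g_i}$ matches the computation in the proof of Theorem~\ref{thm.H}. But the heart of the theorem is exactly the ``rank bookkeeping'' you defer, and the route you sketch does not close it. If you cover $\omega(G/N)$ by lifting the generators $\overline{g_i}-\overline{1}$ to $g_iH - H$, the resulting summands $\bigoplus_{i=1}^r \bbZ[G/(H\cap H^{g_i})]$ already have rank $\sum_{i=1}^r [G:H]\cdot[(H\cdot H^{g_i}):H]$, which in the worst case is the full budget $r[G:H][N:H]$; and you must still add a permutation cover of $\Ind_N^G\,\omega(N/H)$, which requires a generating set of $N$ over $H$ whose size is not controlled by $r$ at all. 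So the (correct) exact sequence $0 \to \Ind_N^G\,\omega(N/H) \to \omega(G/H) \to \omega(G/N) \to 0$ does not by itself produce a permutation cover of the right rank, and you have not supplied the mechanism that would.

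The paper's mechanism is different and is the actual new content. One applies Theorem~\ref{thm.H} to the $rm$ elements $g_in_j$, where $n_1,\dots,n_m$ are coset representatives in $N$ of $H' := \langle H, H^{g_1},\dots,H^{g_r}\rangle$; these generate $G$ over $H$. The decisive step, absent from your proposal, is to choose the lifts $g_i$ of the generators of $G/N$ so as to \emph{maximize} $|H'|$. This optimization gives $[N:H'] \le [N:(H\cdot H^{g_in})]$ for every $i$ and every $n\in N$ (replacing $g_i$ by the alternative lift $g_in$ yields a subgroup containing the subset $H\cdot H^{g_in}$, hence of order at least $|H\cdot H^{g_in}|$, which cannot exceed the maximal value $|H'|$). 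Consequently each of the $rm$ terms $[G:(H\cap H^{g_in_j})] = [G:H]\,[N:H]\,/\,[N:(H\cdot H^{g_in_j})]$ is at most $[G:H][N:H]/m$, and the sum is exactly $r[G:H][N:H]$. Without this maximization over lifts the count fails, so as written your argument has a genuine gap precisely at the point you flag as ``the main obstacle.'' (Your reading of where the hypothesis ``$H\ne\{1\}$ or $r\ge 2$'' enters is essentially right: it rules out the case $s=1$, $H=H^{g_1}=\{1\}$, in which the kernel lattice $M$ fails to be a faithful $G$-module; see Lemma~\ref{lem2.1'}.)
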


\section{$G$-lattices}

In the sequel $H \le G$ will be finite groups. Given $g \in G$
we will write $\overline{g}$ for the left coset $gH$ of $H$.
We will denote the identity element of $G$ by $1$.

Recall that a $G$-lattice $M$ is a (left) $\bbZ[G]$-module, 
which is free of finite rank over $\bbZ$. In particular, any 
finite set $X$ with a $G$-action gives rise to a $G$-lattice 
$\bbZ[X]$; $G$-lattices of this form are called {\em permutation}.
For background material on $G$-lattices we refer the reader 
to~\cite{lorenz}.

Of particular interest to us will be the $G$-lattice
$\omega(G/H)$, which is defined as the kernel of 
the natural augmentation map $\bbZ[G/H] \to \bbZ$, sending
$n_1 \overline{g_1} + \dots + n_s \overline{g_s}$ to 
$n_1 + \dots + n_s$. 

The starting point for our proof 
of Theorem~\ref{thm.csa'} (and hence, of Theorem~\ref{thm.csa})
will be the following result from~\cite{lrrs}.

\begin{thm} \label{thm.lrrs} {\em(}\cite[Theorem 3.5]{lrrs}{\em)}
Let $P$ be a permutation $G$-lattice and
\[ 0 \to M \to P \to \omega(G/H) \to 0 \]
be an exact sequence of $G$-lattices.
If the $G$-action on $M$ is faithful then
\[ \ed(A) \le \rank(M) - n + 1 \]
for any $G/H$-crossed product $A$.
\qed
\end{thm}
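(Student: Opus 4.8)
The plan is to deduce the bound from the general principle that the essential dimension of any member of a family is at most that of a versal (``generic'') member, so that it suffices to construct one sufficiently generic $G/H$-crossed product $A$ and exhibit a field of definition of transcendence degree at most $\rank(M)-n+1$. The combinatorial input is fed in through tori. Writing $T_N:=\Spec k[N]$ for the $k$-torus whose character lattice is a $G$-lattice $N$ (with $G$ acting through its action on $N$), the short exact sequence
\[ 0 \to M \to P \to \omega(G/H) \to 0 \]
dualizes to an exact sequence of $G$-tori
\[ 1 \to T_{\omega(G/H)} \to T_P \to T_M \to 1 . \]
Two features are decisive: because $P$ is a permutation lattice, $T_P$ is quasi-split, hence a special group satisfying $\GC(K,T_P)=0$ (Hilbert 90) for every field $K$; and because $M$ is faithful, $G$ acts faithfully, and generically freely, on $T_M$. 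Note also that $\dim T_{\omega(G/H)}=\rank \omega(G/H)=[G:H]-1=n-1$, which is where the correction term $-n+1$ will originate.

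Next I would set up the classifying data. A $G/H$-crossed product is determined by a Galois $G$-algebra $E/K$ together with a crossed-product datum, and after twisting this datum is a class in $\GC(K,T_{\omega(G/H)})$ (the torus $T_{\omega(G/H)}$ being the twisted form of $R_{L/K}\Gm/\Gm$ that governs the relative Brauer class of $A$). The faithful, generically free $G$-variety $T_M$ supplies the Galois structure: $k(T_M)/k(T_M)^G$ is a $G$-Galois field extension with $\trdeg_k k(T_M)^G=\rank(M)$. The long exact cohomology sequence attached to the displayed sequence of tori, together with $\GC(\,\cdot\,,T_P)=0$, shows that the connecting map from $T_M$ surjects onto $\GC(\,\cdot\,,T_{\omega(G/H)})$; thus the crossed-product datum can be parametrized by $T_M$ with no residual cohomological obstruction. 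In this way one assembles a generic $G/H$-crossed product $A$ defined over a field of transcendence degree $\rank(M)$ over $k$.

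The final, and hardest, step is to remove the extra $n-1$ parameters. Since $[L:K]=n$ (Lemma~\ref{lem2.1}), the algebra $A$ is a free left $L$-module of rank $n$ with a basis indexed by $G/H$; rescaling these $n$ basis elements induces an action of $R_{L/K}\Gm$ on the parameter space, and simultaneous rescaling (the central $\Gm$) does not alter the structure constants. Hence the $(n-1)$-dimensional torus $T_{\omega(G/H)}=R_{L/K}\Gm/\Gm$ acts on the parameters while preserving the $K$-isomorphism class of $A$. I would prove that this action is generically free, so that $A$ descends to the quotient, whose function field has transcendence degree $\rank(M)-(n-1)$; this yields $\ed(A)\le \rank(M)-n+1$ for the generic $A$, and hence for every $G/H$-crossed product. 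The main obstacle is exactly this last descent: making the rescaling action and its genericity precise, checking that the quotient still carries the splitting data of a genuine $G/H$-crossed product, and verifying that faithfulness of $M$ is what guarantees both the honest $G$-Galois structure on $T_M$ and the generic freeness needed for the transcendence degree to drop by exactly $n-1$.
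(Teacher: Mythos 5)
The paper offers no proof of this statement at all: it is quoted from \cite[Theorem 3.5]{lrrs}, so your attempt can only be measured against the argument in that reference --- and against the statement itself, which, as printed here, is garbled. Since $\rank(P)=\rank(M)+\rank\omega(G/H)=\rank(M)+n-1$, the bound that \cite{lrrs} proves, and the bound this paper actually uses (the conclusion of Theorem~\ref{thm.H} is $\sum_{i=1}^s[G:(H\cap H^{g_i})]-[G:H]+1$, which is exactly $\rank(P)-n+1$ for the sequence constructed in its proof), is
\[
\ed(A)\;\le\;\rank(P)-n+1\;=\;\rank(M).
\]
The literal inequality $\ed(A)\le\rank(M)-n+1$ is false: pushed through Theorem~\ref{thm.H} and the deduction of Theorem~\ref{thm.pgl}, it would give $\ed(\PGL_{p^2};p)\le 2$, contradicting Merkurjev's theorem $\ed(\PGL_{p^2};p)=p^2+1$ \cite{me2} quoted in the introduction.

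This pinpoints the gap in your proposal precisely. Your first two steps --- dualizing the lattice sequence to $1\to T_{\omega(G/H)}\to T_P\to T_M\to 1$, using $\GC(K,T_P)=0$ for the quasi-trivial torus $T_P$, lifting the class of $A$ through the connecting map, and assembling a generic algebra over a field of transcendence degree $\rank(M)$ --- are essentially the argument of \cite{lrrs}, and, once two elided points are attended to (the tori must be twisted by the Galois closure $E/K$, and the lift $M\to E^\times$ must be moved into general position so that it is injective and generates a subfield on which $G$ acts faithfully, hence honestly $G$-Galois over its invariants), they prove the correct bound $\ed(A)\le\rank(M)$. Your final step, however, is irreparable, and not merely technically incomplete. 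The $(n-1)$-dimensional rescaling freedom you want to quotient by has already been spent: in your own parametrization, basis rescalings act on the parameter space $T_M$ through the projection $T_P\to T_M$, and the subgroup of rescalings acting trivially on the parameters is precisely the kernel $T_{\omega(G/H)}$ of that projection. So $T_{\omega(G/H)}$ has no effective --- let alone generically free --- residual action on $T_M$; the passage from $T_P$ (dimension $\rank(P)$) to $T_M$ (dimension $\rank(M)=\rank(P)-n+1$) \emph{is} the quotient by the rescaling torus, and subtracting $n-1$ again double-counts that reduction. (Separately, preserving isomorphism classes along orbits would not by itself allow $A$ to descend to a quotient; one needs equivariance of the whole structure. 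But no refinement can rescue the step, since the inequality it is designed to produce is false.)
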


The condition that $G$ acts faithfully on $M$ is not automatic.
However, the following lemma shows that it is satisfied for many 
natural choices of $P$.

\begin{lem} \label{lem2.1'}
Let $G \ne \{ 1 \}$ be a finite group $H \le G$ be a subgroup of $G$, 
$H_1, \ldots, H_r$ be subgroups of $H$ and
\begin{equation} \label{e2.1'}
0 \to M \to \oplus_{i = 1}^r \bbZ [G/H_i] \to \omega(G/H) \to 0 
\end{equation}
be an exact sequence of $G$-lattices.  Assume that
$H$ does not contain any nontrivial normal subgroup of $G$
(i.e., $H$ satisfies condition~\eqref{e.H} above).
Then the $G$-action on $M$ fails to be faithful 
if and only if $s = 1$ and $H_1 = H$.
\end{lem}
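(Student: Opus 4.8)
I would handle the two implications separately, the reverse one being short. Assume first that $r=1$ and $H_1=H$, so that \eqref{e2.1'} reads $0\to M\to\bbZ[G/H]\to\omega(G/H)\to 0$ and $\rank(M)=[G:H]-[G:H]+1=1$. Consider the norm element $\nu=\sum_{xH\in G/H}xH$, which is $G$-fixed and spans $\bbZ[G/H]^G$. Since the augmentation sends $\nu$ to $[G:H]\neq 0$, we have $\omega(G/H)^G=\omega(G/H)\cap\bbZ\nu=0$, so the image of $\nu$ in $\omega(G/H)$ is a $G$-invariant vector in a lattice with no nonzero invariants and hence vanishes; thus $\nu\in M$. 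As $M$ has rank $1$ and contains the fixed vector $\nu$, we get $M\subseteq\bbQ\nu$, so $G$ acts trivially on $M$, and since $G\neq\{1\}$ this action is not faithful.

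For the ``only if'' direction, suppose the action on $M$ is not faithful and use Cauchy's theorem to choose an element $g$ of prime order $p$ in the kernel of $G\to\Aut(M)$, so that $g\neq 1$ acts trivially on $M$. The point is that $M$ is then a trivial, $\bbZ$-free $\langle g\rangle$-module, whence $H^1(\langle g\rangle,M)=\operatorname{Hom}(\langle g\rangle,M)=0$. Taking $\langle g\rangle$-invariants in \eqref{e2.1'} therefore preserves exactness, yielding
\[ 0 \to M \to \bigoplus_{i=1}^r \bbZ[G/H_i]^g \to \omega(G/H)^g \to 0 . \]
Reading off $\rank(M)$ from this sequence (the invariants of a permutation lattice have rank equal to the number of $\langle g\rangle$-orbits, and those of $\omega(G/H)$ have one fewer) and comparing with the value of $\rank(M)$ coming from \eqref{e2.1'}, then converting orbit counts to fixed-point counts using that every nontrivial $\langle g\rangle$-orbit has size $p$, I obtain the \emph{equality}
\[ \sum_{i=1}^r \bigl([G:H_i]-\operatorname{fix}_i(g)\bigr)=[G:H]-\operatorname{fix}_H(g), \]
where $\operatorname{fix}_i(g)$ and $\operatorname{fix}_H(g)$ denote the numbers of cosets fixed by $g$ in $G/H_i$ and in $G/H$.

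The argument then concludes by playing this equality against a fibre-counting inequality. Because $\Core_G(H)=\{1\}$ by \eqref{e.H} and $g\neq 1$, the element $g$ moves at least one coset of $G/H$, so $D:=[G:H]-\operatorname{fix}_H(g)>0$. On the other hand the $G$-equivariant projection $G/H_i\to G/H$ has fibres of size $[H:H_i]$, and any coset lying over a non-fixed coset of $G/H$ is itself non-fixed, so $[G:H_i]-\operatorname{fix}_i(g)\geq [H:H_i]\,D$ for each $i$. Summing over $i$ and substituting into the displayed equality gives $D\geq D\sum_{i=1}^r[H:H_i]$; dividing by $D>0$ forces $\sum_{i=1}^r[H:H_i]\leq 1$. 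As each summand is at least $1$, this means $r=1$ and $[H:H_1]=1$, i.e. $H_1=H$, as desired.

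I expect the main obstacle to be the passage from an inequality to an \emph{equality} of ranks. The obvious inclusion $M\subseteq\bigl(\bigoplus_i\bbZ[G/H_i]\bigr)^g$ only yields $\rank(M)\leq\rank\bigl(\bigoplus_i\bbZ[G/H_i]\bigr)^g$, which is too weak to constrain $r$ and the subgroups $H_i$. It is the vanishing of $H^1(\langle g\rangle,M)$—available precisely because $g$ acts trivially on the torsion-free lattice $M$—that turns the $\langle g\rangle$-invariants into a short exact sequence and so produces the sharp equality; combined with the fibre count and the corefree hypothesis $\Core_G(H)=\{1\}$, this is what collapses the two bounds and pins down $r=1$, $H_1=H$.
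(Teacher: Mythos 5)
Your argument is correct, and it takes a genuinely different route from the paper's. The paper tensors \eqref{e2.1'} with $\bbQ$ and argues by complete reducibility and Frobenius reciprocity: for $r\ge 2$ it exhibits $\bbQ[G/H_{r-1}]$ as a subrepresentation of $M_{\bbQ}$, and for $r=1$, $H_1\subsetneq H$ it computes $M_{\bbQ}\simeq \Ind_H^G\omega(H/H_1)_{\bbQ}\oplus\bbQ[1]$; in either case the kernel of the $G$-action is a normal subgroup of $G$ contained in $H$, hence trivial by \eqref{e.H}. You instead run a Burnside-style counting argument: choose $g\ne 1$ of prime order acting trivially on $M$, pass to $\langle g\rangle$-invariants (exactness follows from $H^1(\langle g\rangle,M)=\operatorname{Hom}(\langle g\rangle,M)=0$, though one could equally just take $g$-invariants of the $\bbQ$-split sequence, since only ranks are used), convert the resulting rank identity into an equality of non-fixed-point counts, and then play it against the fibre estimate for $G/H_i\to G/H$ together with the corefree hypothesis, which forces $\sum_i[H:H_i]\le 1$ and hence $r=1$, $H_1=H$. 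All the individual steps check out: $\omega(G/H)^G=0$ and $\rank M=1$ in the easy direction (the paper simply notes $M\simeq\bbZ$ there); the orbit-to-fixed-point conversion via the factor $(p-1)/p$ is consistent on both sides; $D>0$ because $g\notin\Core_G(H)=\{1\}$; and cosets of $G/H_i$ over a moved coset of $G/H$ are themselves moved. Your approach is more elementary (no character theory beyond orbit counting) and treats all cases uniformly, at the cost of the explicit description of $M_{\bbQ}$ that the paper's decomposition provides.
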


Here we are not specifying the map $\oplus_{i = 1}^r \bbZ [G/H_i] 
\to \omega(G/H)$; the lemma holds for any exact sequence of 
the form~\eqref{e2.1'}. We also note that in the case where
$H_1 = \dots = H_r = \{ 1 \}$, Lemma~\ref{lem2.1'} reduces 
to~\cite[Lemma 2.1]{lrrs}.

\begin{proof} To determine whether or not the $G$-action 
on $M$ is faithful, we may replace $M$ by $M_{\bbQ} := M \otimes \bbQ$. 
After tensoring with $\bbQ$, the sequence~\eqref{e2.1'} splits, and we
have an isomorphism
\begin{equation} \label{e.Q-rep} 
\omega(G/H)_{\bbQ} \oplus M_{\bbQ} \simeq \oplus_{i = 1}^r \bbQ [G/H_i] \, . 
\end{equation}

Case 1: $r \ge 2$. Then $H_r$ is a subgroup of $H$, we have a
natural surjective map $\bbQ [G/H_r] \to \bbQ [G/H]$. Using complete 
irreducibility over $\bbQ$ once again, we see that
$\bbQ [G/H]$ (and hence $\omega(G/H)$) is
a subrepresentation of $\bbQ [G/H_r]$.  Thus~\eqref{e.Q-rep} tells us that 
$\bbQ [G/H_{r-1}]$ is a subrepresentation of $M_{\bbQ}$.
The kernel of the $G$-representation 
on $\bbQ [G/H_{r-1}]$ is a normal subgroup of $G$ contained in 
$H_{r-1}$ (and hence, in $H$); by our assumption on $H$, any such 
subgroup is trivial. This shows that $G$ acts faithfully on  
$\bbQ [G/H_{r-1}]$ and hence, on $M$.

\smallskip
Case 2: Now assume $r = 1$. Our exact sequence now assumes the form
\[ 0 \to M_{\bbQ} \to \bbQ [G/H_1] \to \omega(G/H)_{\bbQ} \to 0 \, . \]
If $H = H_1$ then $M \simeq \bbZ$, with trivial (and hence, non-faithful)
$G$-action.

Our goal is thus to show that if $H_1 \subsetneq H$ then
the $G$-action on $M_{\bbQ}$ is faithful. 
Denote by $\bbQ[1]$ the trivial representation (it will be clear from the context of which group).
Observe that
\[ \begin{array}{lcl}
\bbQ [G/H_1] & \simeq & \Ind_{H_1}^G \bbQ[1] \,\simeq\,  \Ind_H^G\Ind_{H_1}^H \bbQ[1] \,\simeq\, \Ind_H^G \bbQ[H/H_1]\\
	& \simeq & \Ind_H^G (\omega(H/H_1)_{\bbQ}\oplus\bbQ[1])\\
	& \simeq & \Ind_H^G \omega(H/H_1)_{\bbQ} \,\oplus\,\bbQ[G/H]\\
	& \simeq & \Ind_H^G \omega(H/H_1)_{\bbQ} \,\oplus\,\omega(G/H)_{\bbQ}\,\oplus\, \bbQ[1]  
\end{array} \]

and we obtain
\[ M_{\bbQ} \simeq \Ind_H^G \omega(H/H_1)_{\bbQ} \,\oplus\,\bbQ[1]\, . \]
If $H_1 \subsetneq H$ 
then the kernel of the $G$-representation $\Ind_H^G \omega(H/H_1)_{\bbQ}$ 
is a normal subgroup of $G$ contained in $H_1$ (and hence, in $H$).
By our assumption on $H$, this kernel is trivial.
\end{proof}

\section{An upper bound}

In this section we will prove the following upper bound
on the essential dimension of a $G/H$-crossed product.

We will say that $g_1, \dots, g_s \in G$ generate $G$ over $H$ if
$G = \langle g_1, \dots, g_s, H \rangle$. 

\begin{thm} \label{thm.H} Let $A$ be a $G/H$-crossed product.
Suppose that

\smallskip
(i) $g_1, \dots, g_s \in G$ generate $G$ over $H$, and 

\smallskip
(ii) if $G$ is cyclic then $H \ne \{ 1 \}$.

\smallskip
\noindent
Then $\ed(A) \le \sum_{i = 1}^s [G: (H \cap H^{g_i})]  - [G:H] + 1$.
\end{thm}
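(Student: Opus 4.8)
The plan is to exhibit a short exact sequence of $G$-lattices of the shape~\eqref{e2.1'} to which Lemma~\ref{lem2.1'} and Theorem~\ref{thm.lrrs} apply. Writing $H_i := H \cap H^{g_i}$, I would take the permutation lattice $P := \bigoplus_{i=1}^s \bbZ[G/H_i]$ and define a $G$-homomorphism $\phi\colon P \to \omega(G/H)$ by sending the basis element $e_i$ of $\bbZ[G/H_i]$ indexed by the trivial coset to $\overline{g_i} - \overline{1}$. Discarding any $g_i \in H$ (which do not help generate $G$ over $H$), one checks that $H_i$ is precisely the stabilizer in $G$ of $\overline{g_i} - \overline{1}$: indeed $x \in H$ gives $\overline{x} = \overline{1}$, while $x \in H^{g_i}$ is exactly the condition $\overline{x g_i} = \overline{g_i}$, so $x \in H_i$ fixes $\overline{g_i} - \overline{1}$ and no larger subgroup does. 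Hence $\phi$ is a well-defined map of $G$-lattices.

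Next I would verify that $\phi$ is surjective. By hypothesis~(i) every $g \in G$ is, up to right multiplication by an element of $H$, a word in the $g_i^{\pm 1}$, and the telescoping identity
\[ \overline{g_{i_1} \cdots g_{i_t}} - \overline{1} \;=\; \sum_{j=1}^{t} g_{i_1} \cdots g_{i_{j-1}} \bigl( \overline{g_{i_j}} - \overline{1} \bigr) \]
(together with $\overline{g_i^{-1}} - \overline{1} = - g_i^{-1}(\overline{g_i} - \overline{1})$) exhibits every generator $\overline{g} - \overline{1}$ of $\omega(G/H)$ as a $\bbZ$-combination of $G$-translates of the $\overline{g_i} - \overline{1}$. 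Thus $\im \phi = \omega(G/H)$, and setting $M := \ker \phi$ we obtain an exact sequence $0 \to M \to P \to \omega(G/H) \to 0$ of the form~\eqref{e2.1'}, with $\rank(M) = \rank(P) - \rank(\omega(G/H)) = \sum_{i=1}^s [G : H \cap H^{g_i}] - [G:H] + 1$.

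The crux of the argument, and the only place where hypotheses~(i) and~(ii) are genuinely needed, is to show that $G$ acts faithfully on $M$. Here I would apply Lemma~\ref{lem2.1'} with $H_i = H \cap H^{g_i}$: since $H$ satisfies condition~\eqref{e.H}, faithfulness can fail only when $s = 1$ and $H \cap H^{g_1} = H$. I would then rule this case out. Indeed, $H \cap H^{g_1} = H$ forces $H \subseteq H^{g_1}$ and hence, on comparing orders, $H = H^{g_1}$, i.e.\ $g_1 \in N_G(H)$. Combined with $G = \langle g_1, H \rangle$ from~(i), this gives $G \subseteq N_G(H)$, so $H \trianglelefteq G$; then $\Core_G(H) = H$, and~\eqref{e.H} forces $H = \{1\}$. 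But then $G = \langle g_1 \rangle$ is cyclic with trivial $H$, contradicting~(ii). Therefore $G$ acts faithfully on $M$.

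With the faithful lattice $M$ fitting into the exact sequence $0 \to M \to P \to \omega(G/H) \to 0$ with $P$ a permutation lattice, Theorem~\ref{thm.lrrs} applies and converts the rank computation of the second paragraph into the asserted inequality $\ed(A) \le \sum_{i=1}^s [G : H \cap H^{g_i}] - [G:H] + 1$. I expect the faithfulness step to be the main obstacle: the construction of $\phi$ and the surjectivity and rank computations are routine once one recognizes $H \cap H^{g_i}$ as the correct stabilizer, whereas excluding the degenerate case $s = 1,\ H = H^{g_1}$ is exactly what dictates the two hypotheses of the theorem.
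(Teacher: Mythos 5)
Your proposal is correct and follows essentially the same route as the paper --- the same permutation cover $\oplus_{i}\, \bbZ[G/(H\cap H^{g_i})] \twoheadrightarrow \omega(G/H)$, the same stabilizer computation $S_i = H\cap H^{g_i}$ after discarding the $g_i\in H$, and the same appeal to Lemma~\ref{lem2.1'} and Theorem~\ref{thm.lrrs}, with the degenerate case $s=1$, $H=H^{g_1}$ excluded exactly as in the paper; your telescoping identity is just an unrolled form of the paper's Lemma~\ref{lem.subgroup}. One small inaccuracy in your surjectivity step: it is not true in general that every $g\in G$ is a word in the $g_i^{\pm1}$ followed by an element of $H$ (for $G=S_3$, $H=\langle(12)\rangle$, $g_1=(13)$ such products give only four of the six elements), so you should instead write $g$ as a word in the $g_i^{\pm1}$ \emph{and} elements of $H$ and note that the $H$-letters contribute nothing to the telescoping sum since $\overline{h}=\overline{1}$.
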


\begin{remark} \label{rem.H} The index
$[G: (H \cap H^{g_i})]$ appearing in the above formula
can be rewritten as 
\[ [G: H] \cdot [H: (H \cap H^{g_i})] = [G : H] \cdot 
[(H \cdot H^{g_i}): H] \, ; \]
see, e.g., \cite[1.3.11(i)]{robinson}.  Note 
$H \cdot H^g := \{ hh' \, | \, h \in H, \; h' \in H^g \}$ 
is a subset of $G$ but may not be a subgroup, and
$[(H \cdot H^g): H]$ is defined as $\dfrac{|H \cdot H^g|}{|H|}$.

If $H$ is contained in a normal subgroup $N$ of $G$ then clearly
$H \cdot H^g$ lies in $N$, each $[H \cdot H^g: H] \le [N: H]$
and thus Theorem~\ref{thm.H} yields
\[ \ed(A) \le s [G:H] \cdot [N: H] -[G:H]+1\, . \] 
This is a bit weaker than the inequality of Theorem~\ref{thm.csa'},
even though the two look very similar. The difference 
is that we have replaced $r$ in the inequality of Theorem~\ref{thm.csa'}
by $s$, where $G$ is generated by $s$ elements over $H$ and 
by $r$ elements over $N$. A priori $r$ can
be smaller than $s$. Nevertheless in the next section
we will deduce Theorem~\ref{thm.csa'}
from Theorem~\ref{thm.H} by a more delicate argument 
along these lines.
\end{remark}

Our proof of Theorem~\ref{thm.H} will rely on the following lemma.

\begin{lem} \label{lem.subgroup}
Let $V$ be a $\bbZ[G]$-submodule of $\omega(G/H)$. Then
\[  G_V := \{ g \in G \, | \,  \overline{g} - \overline{1} \in V \} \]
is a subgroup of $G$ containing $H$.
\end{lem}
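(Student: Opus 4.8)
The plan is to verify directly that $G_V$ is closed under the group operations and contains $H$, exploiting the fact that $V$ is a $\bbZ[G]$-submodule. First I would check the two defining features separately, and the crucial observation throughout is that the augmentation map sends every element $\overline{g} - \overline{1}$ into $\omega(G/H)$, so all the elements under consideration genuinely live in the ambient lattice.

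I would begin by showing $H \subseteq G_V$. For $h \in H$ we have $\overline{h} = hH = H = \overline{1}$ in $G/H$, so $\overline{h} - \overline{1} = 0 \in V$, and hence $h \in G_V$. This also shows $1 \in G_V$, so $G_V$ is nonempty.

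Next I would check closure under the group operation. Suppose $g, g' \in G_V$, so that $\overline{g} - \overline{1} \in V$ and $\overline{g'} - \overline{1} \in V$. The key device is to act on the second element by $g$: since $V$ is a $\bbZ[G]$-submodule, $g \cdot (\overline{g'} - \overline{1}) = \overline{gg'} - \overline{g} \in V$. Adding this to $\overline{g} - \overline{1} \in V$ gives
\[
(\overline{gg'} - \overline{g}) + (\overline{g} - \overline{1}) = \overline{gg'} - \overline{1} \in V \, ,
\]
so $gg' \in G_V$, proving closure under multiplication. For inverses, I would take $g \in G_V$ and act by $g^{-1}$: from $\overline{g} - \overline{1} \in V$ we obtain $g^{-1} \cdot (\overline{g} - \overline{1}) = \overline{1} - \overline{g^{-1}} \in V$, hence $\overline{g^{-1}} - \overline{1} = -(\overline{1} - \overline{g^{-1}}) \in V$, giving $g^{-1} \in G_V$.

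This argument is essentially a formal telescoping computation, so I do not anticipate a serious obstacle; the only point requiring care is keeping track of the left-coset conventions (the paper writes $\overline{g} = gH$ and uses the left $G$-action on $\bbZ[G/H]$) so that the action formula $g \cdot \overline{g'} = \overline{gg'}$ is applied correctly. Once closure under products and inverses is established together with $H \subseteq G_V$, the conclusion that $G_V$ is a subgroup containing $H$ follows immediately.
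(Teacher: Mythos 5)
Your proof is correct and follows essentially the same telescoping argument as the paper: $\overline{gg'}-\overline{1} = g\cdot(\overline{g'}-\overline{1}) + (\overline{g}-\overline{1})$. The only difference is that you also verify closure under inverses explicitly, whereas the paper omits this (it is automatic for a nonempty multiplicatively closed subset of a finite group); this is a harmless addition, not a different approach.
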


\begin{proof} The inclusion $H \subset G_V$ is obvious from the definition.

To see that $G_V$ is closed under multiplication, suppose
$g, g' \in G_V$. That is, both $\overline{g} - \overline{1}$ and 
$\overline{g'} - \overline{1}$ lie in $V$. Then
\[ \overline{g g'} - \overline{1} =  
g \cdot (\overline{g'} - \overline{1}) +  
(\overline{g} - \overline{1}) \] 
also lies in $V$, i.e., $g g ' \in G_V$, as desired.
\end{proof}

\begin{proof}[Proof of Theorem~\ref{thm.H}]
We claim that the elements 
$\overline{g_1} - \overline{1}, \ldots, \overline{g_s} - \overline{1}$
generate $\omega(G/H)$ as a $\bbZ[G]$-module.

Indeed, let $V$ be the $\bbZ[G]$-submodule of $\omega(G/H)$ generated
by these elements.  Lemma~\ref{lem.subgroup} and condition (i)
tell us that $V$ contains $\overline{g} - \overline{1}$ for 
every $g \in G$. Translating these elements by $G$, we see 
that $V$ contains $\overline{a} - \overline{b}$ for every $a, b \in G$. 
Hence, $V = \omega(G/H)$, as claimed.

For $i = 1, \dots, s$, let 
\[ S_i := \{ g \in G \, | \, g \cdot (\overline{g_i} - 
\overline{1}) = \overline{g_i} - \overline{1} \} \] 
be the stabilizer of $\overline{g_i} - \overline{1}$ in $G$.
We may assume here that $g_i$ is not in $H$, otherwise it could be removed since it is not needed to generate $G$ over $H$.
Then clearly $g \in S_i$ iff $\overline{g g_i} = \overline{g_i}$ and
$\overline{g} = \overline{1}$. From this one easily sees that
$S_i = H \cap H^{g_i}$. Thus we have an exact sequence
\[ 0 \to M \to \oplus_{i = 1}^s \bbZ [G/S_i] 
\xrightarrow{\phi} \omega(G/H) \to 0 \]
where $\phi$ sends a generator of $\bbZ[G/S_i]$ to $\overline{g_i} - \overline{1} \in \omega(G/H)$.
By Theorem~\ref{thm.lrrs} it remains to show that $G$ acts faithfully on $M$.

By Lemma~\ref{lem2.1'} $G$ fails to act faithfully on $M$ if and only 
if $r = 1$ and $S_1 = H = H^{g_1}$. But this 
possibility is ruled out by (ii). Indeed, assume that
$s = 1$ and $S_1 = H = H^{g_1}$. Then $G = \langle g_1, H \rangle$ 
and $H = H^{g_1}$. Hence, $H$ is normal in $G$. 
Condition~\eqref{e.H} then tells us that $H = \{ 1 \}$. Moreover,
in this case
$G = \langle g_1, H \rangle = \langle g_1 \rangle$ is cyclic, 
contradicting (ii).
\end{proof}

\section{Proof Theorem~\ref{thm.csa}} 

As we saw above, it suffices to prove Theorem~\ref{thm.csa'}.

Let $t_1, \dots, t_r \in G/N$ be a set of generators for $G/N$.
Choose $g_1, \dots, g_r \in G$ representing $t_1, \dots, t_r$.
and let $H' := \langle H, H^{g_1}, \ldots, H^{g_r} \rangle$.
Since $H \le N$ and $N$ is normal in $G$, $H' \le N$.
The group $H'$ depends on the choice of $g_1, \dots, g_r \in G$,
so that $g_i N = t_i$. Fix $t_1, \dots, t_r$ and choose 
$g_1, \dots, g_r \in G$ representing them, so that $H'$ 
has the largest possible order or equivalently the smallest possible
index in $N$. Denote this minimal possible value of $[N: H']$
by $m$. In particular
\begin{equation} \label{e.N}
m = [N : H'] \le [N : (H^{g_i g} \cdot H)] 
\end{equation}
for any $i = 1, \dots, r$ and any $g \in N$. Here   
$[N : (H^{g_i g} \cdot H)] = \dfrac{|N|}{|H^{g_i g} \cdot H|}$,
as in Remark~\ref{rem.H}.  

Choose a set of representatives
$1 = n_1, n_2, \ldots, n_m \in N$ for the distinct left cosets of
$H'$ in $N$. We claim that the elements
\[ \{ g_i n_j \, | \, i = 1, \dots, r; \; j = 1, \dots, m \} \]
generate $G$ over $H$. 
Indeed, let $G_0$ be the subgroup of $G$ generated by these elements 
and $H$. Since $n_1 = 1$, $G_0$ contains $g_1, \dots, g_r$. 
Hence, $G_0$ contains $H'$. Moreover, $G_0$ contains 
$n_j = g_1^{-1}(g_1 n_j) $ for every $j$; hence, $G_0$
contains all of $N$. Finally, since 
$t_1 = g_1 N, \dots, t_r = g_r N$ generate $G/N$, 
we conclude that $G_0$ contains all of $G$. This proves the claim.    

We now apply Theorem~\ref{thm.H} to the elements $\{ g_i n_j \}$.
Substituting 
\[ \text{$[G: H] \cdot [H : (H \cdot H^{g_i n_j})]$
for
$[G: (H \cap H^{g_i n_j})]$,} \]
as in Remark~\ref{rem.H}, we obtain
\begin{align*}
\ed(A) & \le & \sum_{i = 1}^r \sum_{j = 1}^m 
[G: (H \cap H^{g_i n_j})]  - [G:H] + 1 \\  
       & = & 
[G : H] \cdot 
\sum_{i = 1}^r \sum_{j = 1}^m 
 [(H \cdot H^{g_i n_j}) : H]  - [G:H] + 1 \\  
       & = & 
[G : H] \cdot 
\sum_{i = 1}^r \sum_{j = 1}^m 
\frac{[N : H]}{[N : (H \cdot H^{g_i n_j})]} - [G:H] + 1 \\  
       & \le & \text{(by~\eqref{e.N})} \quad 
[G : H] \cdot 
\sum_{i = 1}^r \sum_{j = 1}^m 
\frac{[N : H]}{m} - [G:H] + 1 \\  
       & = & 
r [G: H] \cdot [N : H] - [G:H] + 1 
\end{align*}
as desired.
This completes the proof of Theorem~\ref{thm.csa'} and 
thus of Theorem~\ref{thm.csa}.
\qed

\section*{Acknowledgments} The authors are grateful to 
B. A. Sethuraman, D. J. Saltman, and B. Totaro for helpful comments.

\end{document}